\def\E{\mathbb E}
\def\P{\mathbb P}
\def\vol{{\rm{vol}}}
\theoremstyle{plain}
\newtheorem{Thm}{Theorem}
\newtheorem{Lem}{Lemma}
\theoremstyle{definition}
\theoremstyle{remark}
\begin{document}
\title{Inverse Erd\H os-Fuchs theorem for $k$-fold sumsets}
\author{Li-Xia Dai}
\email{lilidainjnu@163.com}
\address{School of Mathematical Sciences, Nanjing Normal University, Nanjing 210023, People's Republic of China}
\author{Hao Pan}
\email{haopan79@yahoo.com.cn}
\address{Department of Mathematics, Nanjing University,
Nanjing 210093, People's Republic of China}
\begin{abstract}
We generalize a result of Ruzsa on the inverse Erd\H os-Fuchs theorem for $k$-fold sumsets.
\end{abstract}

\maketitle

\section{Introduction}
\setcounter{equation}{0} \setcounter{Thm}{0} \setcounter{Lem}{0}
\setcounter{Cor}{0}

For $r>0$, let $N(r)$ count the number of lattice points inside
the boundary of a circle with the center at the origin and radius
$r$. The famous Gauss circle problem says that
$$
N(r)=\pi r^2+O(r^{1/2+\epsilon}).
$$
However, the current best result is that here
$O(n^{1/2+\epsilon})$ can be  replaced by $O(n^{131/208})$. On the
other hand, using the techniques of the Fourier analysis, Hardy
proved that
$$
N(r)=\pi r^2+O(r^{1/2}(\log r)^{1/4})
$$
can't hold for all sufficiently large $r$.

Consider a sequence $A=\{a_1\leq a_2\leq a_3\leq\ldots\}$ of non-negative integers with $\lim_{n\to\infty} a_n=\infty$.
For a positive integer $n$, define
$$
r_{kA}(n)=|\{(i_1,i_2,\ldots,i_k):\,a_{i_1}+a_{i_2}+\cdots+a_{i_k}=n,\
a_{i_1},\ldots,a_{i_k}\in A\}|,
$$
i.e., $r_{kA}(n)$ counts the number of representations of  $n$ as
the sum of $k$ elements in $A$. It is easy to see that
$$
N(r)=\sum_{m\leq r^2}r_{2A}(m)
$$
provided $A=\{0,1,4,9,16,\ldots\}$.

In \cite{EF56}, Erd\H os and Fuchs proved that for any sequence $A$ and constant $C>0$,
\begin{equation}\label{ef}
\sum_{m\leq n}r_{2A}(m)=Cn+o(n^{1/4}(\log n)^{-1/2})
\end{equation}
can't hold for all sufficiently large $n$. Although here
$o(n^{1/4}(\log n)^{-1/2})$ is slightly weaker than Hardy's bound
$O(n^{1/4}(\log n)^{1/4})$, the Erd\H os and Fuchs theorem is
valid for any sequence $A$ of non-negative integers, rather than
only for $A=\{0,1,4,9,16,\ldots\}$. Subsequenty, Jurkat
(unpublished), and Montgomery and Vaughan  \cite{MV90} removed
$(\log n)^{-1/2}$ in (\ref{ef}). Nowadays, there are several
different generalizations of the Erd\H os-Fuchs theorem (cf.
\cite{CT11, H01, H02, H04, T09}). For example, Tang \cite{T09}
proved that for $k\geq 2$,
$$
\sum_{m\leq n}r_{kA}(m)=Cn+o(n^{1/4})
$$
can't hold for all sufficiently large $n$.

In the opposite direction, Vaughan asked whether there exists a sequence $A$ and $C>0$ such that
$$
\sum_{m\leq n}r_{2A}(m)=Cn+O(n^{1/4+\epsilon}).
$$
With help of a probabilistic discussion, Ruzsa \cite{R97} gave an affirmative answer to this question. In fact,
he proved that there exists a sequence $A=\{a_1\leq a_2\leq\ldots\}$ satisfying
$$
\sum_{m\leq n}r_{2A}(m)=Cn+O(n^{1/4}\log n)
$$
for all sufficiently large $n$.

It is natural to ask whether Ruzsa's result can be generalized to
the $k$-fold sums. In this note, we shall prove that
\begin{Thm}\label{t1} Suppose that $k\geq 2$ is an integer and $\beta<k$ is a positive real number. Then there exists a sequence $A=\{a_1\leq a_2\leq a_2\leq\cdots\}$ of positive integers, satisfying
$$
\sum_{m\leq n}r_{kA}(m)-Cn^\beta=\begin{cases}
O(n^{\beta-\beta(k+\beta)/k^2}\sqrt{\log n}),&\text{if }k>2\beta,\\
O(n^{\beta-3\beta/(2k)}\sqrt{\log n}),&\text{if }k<2\beta,\\
O(n^{\beta-3/4}\log n),&\text{if }k=2\beta,
\end{cases}
$$
where $C$ is a constant.
\end{Thm}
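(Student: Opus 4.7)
Following the strategy of Ruzsa \cite{R97}, the plan is to establish the existence of the desired sequence $A$ by a probabilistic construction. Setting $\alpha := \beta/k$, define $A$ to be a random subset of $\N$ in which each positive integer $a$ is included independently with probability $p_a := \min\{1,\,c a^{\alpha-1}\}$, where the constant $c>0$ is to be fixed. Writing $\xi_a = \mathbf{1}_{a\in A}$ for the Bernoulli indicators, the random quantity of interest is the degree-$k$ multilinear polynomial
\[
  T_n := \sum_{m \leq n} r_{kA}(m) = \sum_{\vec a \in \N^k:\, \sum_{i} a_i \leq n} \prod_{i=1}^k \xi_{a_i}.
\]

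The first step is to show $\E[T_n] = Cn^\beta + O(n^{\beta-1})$ for an explicit constant $C = C(c,k) > 0$. Modulo a negligible diagonal contribution from tuples with repeated coordinates, $\E[T_n]$ equals the partial sum $\sum_{m \leq n}(p^{*k})(m)$ of the $k$-fold convolution of $(p_a)$. Singularity analysis of $P(z) := \sum_a p_a z^a \sim c\,\Gamma(\alpha)(1-z)^{-\alpha}$ and of its $k$-th power $P(z)^k \sim c^k\,\Gamma(\alpha)^k(1-z)^{-\beta}$ then yields the stated asymptotic together with an explicit formula for $C$.

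The heart of the argument is the concentration estimate for $T_n - \E[T_n]$. I would first bound the variance by expanding
\[
  \mathrm{Var}(T_n) = \sum_{\vec a,\vec b}\, \mathrm{Cov}\Bigl(\prod_i \xi_{a_i},\,\prod_j \xi_{b_j}\Bigr)
\]
and classifying the pairs $(\vec a,\vec b)$ by the number $j\in\{1,\ldots,k\}$ of coordinate values they share; in view of $\sum_{a\leq T} p_a \asymp T^\alpha$, the $j$-th contribution reduces to a Beta-type integral of order $\int_0^n M^{\alpha j-1}(n-M)^{2\alpha(k-j)}\,dM$. The optimization over $j$ is governed by whether the partial sum $\sum_{a\leq n} p_a^2$ converges, diverges polynomially, or grows logarithmically --- i.e., the cases $2\alpha<1$, $2\alpha>1$, $2\alpha=1$, corresponding exactly to $k>2\beta$, $k<2\beta$, $k=2\beta$ --- and this is where the three regimes in the statement arise. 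A Bernstein--Chernoff-type concentration inequality for the multilinear polynomial $T_n$ then upgrades the second-moment bound to sub-Gaussian deviation estimates and yields the factor $\sqrt{\log n}$ (or $\log n$ in the critical case).

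Finally, a union bound over the geometric subsequence $n_\ell = 2^\ell$, combined with the Borel--Cantelli lemma, produces a single realization of $A$ for which $|T_{n_\ell} - Cn_\ell^\beta|$ is of the claimed order for all large $\ell$; since $T_n$ is nondecreasing in $n$, the estimate extends to every integer $n$. The main obstacle, in my view, is the sharp concentration step: the combinatorial bookkeeping of covariance contributions over the sharing patterns must be executed carefully enough to pull out the correct exponent in each of the three regimes, the transition between the regimes must be handled uniformly (which is where the logarithmic correction in the critical case appears), and a Bernstein-type concentration rather than mere Chebyshev is needed to incur only a logarithmic rather than polynomial loss.
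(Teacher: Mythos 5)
Your construction is not the one the paper (or Ruzsa) uses, and unfortunately it cannot work: the Bernoulli random-set model has fluctuations that are strictly larger than every error term in the statement. With $p_a\asymp a^{\alpha-1}$, $\alpha=\beta/k$, the counting function $A(x)=\sum_{a\le x}\xi_a$ has mean $\asymp x^{\alpha}$ but standard deviation $\asymp x^{\alpha/2}$, and this square-root noise propagates into $T_n$ multiplied by $\E[A(n)]^{k-1}\asymp n^{(k-1)\alpha}$. Concretely, the $j=1$ (one shared coordinate) term in your own covariance expansion is
$$
\sum_{a\le n}p_a(1-p_a)\Bigl(\sum_{\vec b:\,\sum b_i\le n-a}\textstyle\prod_i p_{b_i}\Bigr)^2\asymp\sum_{a\le n}a^{\alpha-1}(n-a)^{2(k-1)\alpha}\asymp n^{(2k-1)\alpha}=n^{2\beta-\beta/k},
$$
so $\mathrm{Var}(T_n)\gg n^{2\beta-\beta/k}$ and the typical deviation of $T_n$ from $\E[T_n]$ is of order $n^{\beta-\beta/(2k)}$. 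All three target exponents, $\beta-\beta(k+\beta)/k^2$, $\beta-3\beta/(2k)$ and $\beta-3/4=\beta-3\beta/(2k)$ (when $k=2\beta$), are strictly smaller than $\beta-\beta/(2k)$, so no concentration inequality, however sharp, can rescue the construction; the quantity you are trying to control is simply not that small. (For $k=2$, $\beta=1$ this is the classical observation that the Bernoulli model gives error $n^{3/4}$, not Ruzsa's $n^{1/4}\log n$.) A secondary issue is that the ``Bernstein--Chernoff-type concentration for multilinear polynomials'' with sub-Gaussian tails that you invoke does not exist in the generality you need for degree $k\ge2$; Kim--Vu-type bounds lose polynomial factors.

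The paper avoids this by making the counting function of $A$ essentially deterministic and putting the randomness only at the boundary: one sets $\alpha=k/\beta>1$, takes $\theta_i$ uniform on $[i,i+1]$ independently, and $a_i=\lfloor\theta_i^{\alpha}\rfloor$, so that $|A\cap[0,x]|=x^{1/\alpha}+O(1)$ surely. Then $\sum_{m\le n}r_{kA}(m)$ equals the volume $C_{k,\alpha}n^{\beta}$ of the region $x_1^{\alpha}+\cdots+x_k^{\alpha}\le n$ plus a sum of centered bounded indicators supported only on the $O(n^{(k-1)/\alpha})$ unit boxes straddling the boundary. The real work (Lemma 2.3 of the paper) is a combinatorial partition of these boundary boxes into $O(n^{(k-2)/\alpha})$ classes within which the indicators are genuinely independent, so that Hoeffding's inequality applies classwise; a union bound and Borel--Cantelli finish the proof, and the three regimes come from estimating $\sum|I_{i_1,\ldots,i_{k-1}}|^2$ (a Beta-type integral much like the one you wrote down, but for boundary widths rather than for variances). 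If you want to salvage your write-up, you should replace the Bernoulli model by this perturbed-sequence model; your singularity-analysis computation of the expectation and your case analysis at $2\alpha=1$ would then be replaced by the volume computation and the integral $\int_0^{n^{1/\alpha}-1}\bigl(n^{1-1/\alpha}/(n-tn^{1-1/\alpha})^{1-1/\alpha}\bigr)^2\,dt$.
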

The proof of Theorem \ref{t1} will be given in the next two sections. And throughout this paper, the implied constants in $O$, $\ll$, $\gg$ only depend on $\beta$ and $k$.

\section{The probabilistic approach}
\setcounter{equation}{0} \setcounter{Thm}{0} \setcounter{Lem}{0}
\setcounter{Cor}{0}
Let $\theta_i$ be independent random variables which is uniformly distributed in the interval $[i,i+1]$. Let
$$
a_i=\lfloor \theta_i^\alpha\rfloor,
$$
where $\alpha=k/\beta$.
Clearly $a_i\leq a_j$ provided $i\leq j$. Now assume that $n$ is sufficiently large. Let
$$
\Omega=\{(x_1,x_2,\ldots,x_k):\,x_1^\alpha+\cdots+x_k^\alpha\leq n,\ x_i\geq 0\}.
$$
It is easy to see that the volume of $\Omega$
$$
\vol(\Omega)=C_{k,\alpha}n^{k/\alpha},
$$
where $C_{k,\alpha}$ is a constant only depending on $k$ and $\alpha$.
For $i_1,\ldots,i_k\geq 0$, let $d_{i_1,\ldots,i_k}$ denote the volume of the intersection of
$\Omega$ and the hypercube
$$
[i_1,i_1+1]\times[i_2,i_2+1]\times\cdots\times[i_k,i_k+1].
$$
It is easy to see that
$$
\sum_{i_1,\ldots,i_k\geq 0}d_{i_1,\ldots,i_k}=\vol(\Omega)=C_{k,\alpha}n^{\beta}.
$$
Define
$$
\delta_{i_1,\ldots,i_k}=\begin{cases}1&\text{if }\theta_{i_1}^\alpha+\cdots+\theta_{i_k}^\alpha\leq n,\\
0&\text{otherwise,}\end{cases}
$$
and let
$$
\sigma_n=\sum_{i_1,\ldots,i_k\geq 0}\delta_{i_1,\ldots,i_k}.
$$
Let
$$A=\{a_1,a_2,\ldots\}.$$
Since
$$
\lfloor \theta_{i_1}^\alpha\rfloor+\cdots+\lfloor \theta_{i_k}^\alpha\rfloor\leq \theta_{i_1}^\alpha+\cdots+\theta_{i_k}^\alpha\leq\lfloor \theta_{i_1}^\alpha\rfloor+\cdots+\lfloor \theta_{i_k}^\alpha\rfloor+k,
$$
clearly we have
$$
\sigma_n\leq\sum_{m\leq n}r_{kA}(m)\leq\sigma_{n+k}.
$$
Note that $\delta_{i_1,\ldots,i_k}=d_{i_1,\ldots,i_k}$ if $(i_1+1)^\alpha+\cdots+(i_k+1)^\alpha\leq n$ or $i_1^\alpha+\cdots+i_k^\alpha\geq n$. So we only need to consider those $i_1,\ldots,i_k$ such that
$$i_1^\alpha+\ldots+i_k^\alpha<n<(i_1+1)^\alpha+\ldots+(i_k+1)^\alpha.
$$
\begin{Lem}\label{l1} Suppose that $a_1,\ldots,a_k$ are positive integers. Then
$$
|\{(i_1,\ldots,i_k):\, a_1i_1^\alpha+\cdots+a_ki_k^\alpha<n<a_1(i_1+1)^\alpha+\cdots+a_k(i_k+1)^\alpha\}|=O(n^{(k-1)/\alpha}),
$$
where the implied constant in $O$ only depends on $a_1,\ldots,a_k$.
\end{Lem}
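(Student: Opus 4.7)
I will induct on $k$. For $k=1$, the double inequality $a_1 i_1^\alpha<n<a_1(i_1+1)^\alpha$ pins $i_1$ down to at most one value, matching $O(n^{0/\alpha})$. For the inductive step, fix $(i_2,\ldots,i_k)\in\Z_{\geq 0}^{k-1}$ and count admissible $i_1$. Set $S:=\sum_{j=2}^{k}a_j i_j^\alpha$, $T:=\sum_{j=2}^{k}a_j(i_j+1)^\alpha$ and $U:=(\max(n-S,0)/a_1)^{1/\alpha}$, $L:=(\max(n-T,0)/a_1)^{1/\alpha}$. The conditions translate to $i_1<U$ (nontrivial only when $S<n$) and $i_1+1>L$ (automatic when $T\ge n$). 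I split the sum over $(i_2,\ldots,i_k)$ according to whether $T\ge n$ or $T<n$.

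If $T\ge n>S$, the tuple $(i_2,\ldots,i_k)$ essentially witnesses the $(k-1)$-dimensional version of the condition in the lemma, so by induction there are $O(n^{(k-2)/\alpha})$ such tuples (boundary cases $T=n$ are negligible, and can be absorbed by proving the slightly stronger estimate with $\le n\le$ in place of $<n<$). For each of them, $0\le i_1<U$ gives at most $U+1\ll n^{1/\alpha}$ admissible values, for a total of $O(n^{(k-1)/\alpha})$.

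If instead $T<n$, the number of admissible $i_1$ is at most $U-L+2$. The count of such tuples is crudely at most $\prod_{j\ge 2}((n/a_j)^{1/\alpha}+1)=O(n^{(k-1)/\alpha})$, handling the constant "$+2$". It remains to control $\sum_{T<n}(U-L)$. For this I would introduce $\phi(x_2,\ldots,x_k):=(\max(n-\sum_{j\ge 2}a_j x_j^\alpha,0)/a_1)^{1/\alpha}$, which is non-negative and decreasing in each coordinate. Then $U-L=\phi(i_2,\ldots,i_k)-\phi(i_2+1,\ldots,i_k+1)$, which I would break into a telescoping sum $\sum_{j=2}^{k}\Delta_j$ of non-negative one-coordinate differences (with indices past $j$ already incremented). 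For each $j$, summing $\Delta_j$ over $i_j\ge 0$ collapses to a single evaluation of $\phi$ on the face $\{x_j=0\}$, which is at most $(n/a_1)^{1/\alpha}\ll n^{1/\alpha}$. Summing over the remaining $k-2$ coordinates, each of range $\ll n^{1/\alpha}$, produces $O(n^{(k-1)/\alpha})$ per $j$, and the same total after summing over the $k-1$ values of $j$.

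The step I expect to be the main obstacle is this last telescoping estimate. A direct mean-value bound $U-L\ll(n-T)^{1/\alpha-1}(T-S)$ blows up as $T\uparrow n$ (since $1/\alpha-1<0$ in the regime $\alpha=k/\beta>1$ of interest), so one cannot bound $\sum(U-L)$ pointwise in any reasonable way. The telescoping trick is precisely what hides the boundary singularity of $\phi$ inside the single endpoint value $\phi|_{x_j=0}$, and once that observation is made, the remaining bookkeeping is routine.
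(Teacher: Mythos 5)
Your proposal is correct, but it proves the lemma by a genuinely different route from the paper. The paper's argument is geometric and non-inductive: from $(x+1)^\alpha-x^\alpha\leq\alpha(x+1)^{\alpha-1}$ it deduces that every tuple in the set gives a unit hypercube $[i_1,i_1+1]\times\cdots\times[i_k,i_k+1]$ entirely contained in the thin shell $\Omega^{(a_1,\ldots,a_k)}_{R_+}\setminus\Omega^{(a_1,\ldots,a_k)}_{R_-}$ with $R_\pm^\alpha=n\pm(a_1+\cdots+a_k)(n^{1-1/\alpha}+1)$; since these cubes are disjoint and of volume $1$, the count is at most the volume of the shell, which by the homogeneity $\vol(\Omega_R)=C^{(a_1,\ldots,a_k)}_{k,\alpha}R^k$ and one application of the mean value theorem is $\ll n^{1-1/\alpha}\cdot n^{k/\alpha-1}=n^{(k-1)/\alpha}$. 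Your induction-plus-telescoping argument reaches the same bound without ever computing a volume: the key point you isolate --- that the pointwise mean-value bound $U-L\ll (n-T)^{1/\alpha-1}(T-S)$ degenerates as $T\uparrow n$, so one must sum by parts and let the singular boundary contribution collapse into the single face value $\phi|_{x_j=0}\leq (n/a_1)^{1/\alpha}$ --- is a real difficulty of the slicing approach, and your telescoping handles it correctly (the nonnegativity of each $\Delta_j$ lets you extend the sum to the full box, and Case 1 with $S<n\leq T$ is exactly the inductive instance, modulo the harmless strengthening to non-strict inequalities that you note). The paper's shell argument sidesteps this singularity entirely because the cubes near the boundary are simply absorbed into the outer region's volume; that is what makes its proof a few lines long. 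Your version is more elementary and would adapt to level sets that are not dilates of a fixed body, at the cost of heavier bookkeeping; for the purposes of this paper the two are interchangeable.
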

\begin{proof}
Consider
$$
\Omega_R^{(a_1,\ldots,a_k)}=\{(x_1,\ldots,x_k):\,a_1x_1^\alpha+\cdots+a_kx_k^\alpha\leq R^\alpha,\ x_i\geq 0\}.
$$
Clearly
$$
\vol(\Omega_R^{(a_1,\ldots,a_k)})=C_{k,\alpha}^{(a_1,\ldots,a_k)}R^k,
$$
where $C_{k,\alpha}^{(a_1,\ldots,a_k)}$ is a constant. In view of the differential mean value theorem, we have
$$
(x+1)^\alpha-x^\alpha\leq \alpha(x+1)^{\alpha-1}
$$
Hence
$$
a_1i_1^\alpha+\cdots+a_ki_k^\alpha<n<a_1(i_1+1)^\alpha+\cdots+a_k(i_k+1)^\alpha
$$
implies that
$$
n-(a_1+\cdots+a_k)(n^{1-1/\alpha}+1)<a_1i_1^\alpha+\ldots+a_ki_k^\alpha
$$
and
$$a_1(i_1+1)^\alpha+\cdots+a_k(i_k+1)^\alpha<n+(a_1+\cdots+a_k)(n^{1-1/\alpha}+1).
$$
That is, the hypercube $[i_1,i_1+1]\times\cdots\times[i_k,i_k+1]$ is completely contained in
$$
\Omega_{(n+(a_1+\cdots+a_k)(n^{1-1/\alpha}+1))^{1/\alpha}}^{(a_1,\ldots,a_k)}\setminus
\Omega_{(n-(a_1+\cdots+a_k)(n^{1-1/\alpha}+1))^{1/\alpha}}^{(a_1,\ldots,a_k)}.
$$
However,
\begin{align*}
&(n+(a_1+\cdots+a_k)(n^{1-1/\alpha}+1))^{k/\alpha}-(n-(a_1+\cdots+a_k)(n^{1-1/\alpha}+1))^{k/\alpha}\\
\leq&\frac{2k}{\alpha}(a_1+\cdots+a_k)(n^{1-1/\alpha}+1)\cdot(n+(a_1+\cdots+a_k)(n^{1-1/\alpha}+1))^{k/\alpha-1}\ll n^{(k-1)/\alpha}.
\end{align*}
\end{proof}
Let
$$
I=\{(i_1,\ldots,i_k):\,i_1>\cdots>i_k,\ i_1^\alpha+\ldots+i_k^\alpha<n<(i_1+1)^\alpha+\ldots+(i_k+1)^\alpha\}.
$$
By Lemma \ref{l1}, we have
\begin{align*}|\{(i_1,\ldots,i_{k-1}):\,i_1^\alpha+\ldots+2i_{k-1}^\alpha<n<(i_1+1)^\alpha+\ldots+2(i_{k+1}+1)^\alpha\}|\ll n^{(k-2)/\alpha}.
\end{align*}
It follows that
$$
\sigma_n-{\rm vol}(\Omega)=k!\sum_{(i_1,\ldots,i_k)\in I}(\delta_{i_1,\ldots,i_k}-d_{i_1,\ldots,i_k})+O(n^{(k-2)/\alpha}).
$$
For $i_1>\cdots>i_k$, since $\alpha_{i_1},\ldots,\alpha_{i_k}$ are independent, it is evident that
$$
\E\bigg(\sum_{(i_1,\ldots,i_k)\in I}(\delta_{i_1,\ldots,i_k}-d_{i_1,\ldots,i_k})\bigg)=0.
$$
\begin{Lem}[{\cite[Theorem 1.2]{H63}}]\label{hoe}
Let $\xi_1,\ldots,\xi_k$ be independent bounded real random variables. Suppose that $a_i\leq \xi_i\leq b_i$ and
$$
\sum_{i=1}^k(b_i-a_i)^2\leq D.
$$
Then for every $y$,
$$
\P(\eta-\E(\eta)\geq yD)\leq\exp(-2y^2),
$$
where $\eta=\xi_1+\cdots+\xi_k$.
\end{Lem}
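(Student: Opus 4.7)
The natural route is the classical exponential moment (Chernoff) method, coupled with Hoeffding's lemma bounding the moment generating function of a bounded centred random variable. First, for an arbitrary parameter $t>0$ I would apply Markov's inequality to the non-negative random variable $e^{t(\eta-\E\eta)}$ to obtain
$$
\P(\eta-\E\eta\geq yD)\leq e^{-tyD}\,\E\bigl[e^{t(\eta-\E\eta)}\bigr].
$$
Since the $\xi_i$ are independent, so are the centred variables $X_i=\xi_i-\E\xi_i$, and the moment generating function factorises:
$$
\E\bigl[e^{t(\eta-\E\eta)}\bigr]=\prod_{i=1}^{k}\E\bigl[e^{tX_i}\bigr].
$$

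The core step, and the only one more substantial than bookkeeping, is to prove Hoeffding's lemma:
$$
\E\bigl[e^{tX_i}\bigr]\leq\exp\!\left(\frac{t^{2}(b_i-a_i)^{2}}{8}\right)\qquad(i=1,\ldots,k).
$$
To establish this I would exploit the convexity of $x\mapsto e^{tx}$ on the interval $[a_i-\E\xi_i,\,b_i-\E\xi_i]$, dominating $e^{tx}$ by the chord joining its two endpoint values. Taking expectations (the linear term in $x$ vanishes because $\E X_i=0$) reduces the desired bound to a deterministic inequality of the form $\log(1-p+pe^{h})-ph\leq h^{2}/8$ for $p\in[0,1]$ and $h=t(b_i-a_i)$, which I would verify by Taylor's theorem once I check that the second derivative of the left-hand side is bounded above by $1/4$.

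With Hoeffding's lemma in hand, substituting back and using the hypothesis $\sum_{i=1}^{k}(b_i-a_i)^{2}\leq D$ yields
$$
\P(\eta-\E\eta\geq yD)\leq\exp\!\left(-tyD+\frac{t^{2}D}{8}\right).
$$
Optimising the right-hand side in $t>0$ (the minimum is attained at $t=4y$) produces the desired Gaussian-type tail bound. The only delicate point in the whole argument is the proof of Hoeffding's lemma via the convexity and second-derivative estimate; the remaining steps are mechanical manipulations of exponentials and an elementary one-variable optimisation.
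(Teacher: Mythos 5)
The paper offers no proof of this lemma at all: it is imported verbatim from Hoeffding \cite{H63}, so there is no internal argument to compare against. Your route --- Markov's inequality applied to $e^{t(\eta-\E\eta)}$, factorisation by independence, Hoeffding's lemma $\E[e^{tX_i}]\leq\exp(t^2(b_i-a_i)^2/8)$ via convexity and the second-derivative bound on $\log(1-p+pe^h)-ph$, then optimisation in $t$ --- is exactly the classical proof, and every step you describe is sound.

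However, your last sentence papers over a real mismatch. From your penultimate display $\P(\eta-\E\eta\geq yD)\leq\exp(-tyD+t^2D/8)$, the optimal choice $t=4y$ gives $\exp(-2y^2D)$, \emph{not} $\exp(-2y^2)$; these agree only when $D=1$, and the former implies the latter only when $D\geq 1$. The discrepancy is traceable to the lemma's hypothesis being misstated: to obtain $\exp(-2y^2)$ for a deviation of size $yD$ one needs $\sum_i(b_i-a_i)^2\leq D^2$ (so that $D$ bounds the $\ell^2$-norm of the ranges), which is precisely how the lemma is invoked later in the paper, where $D$ dominates $\sqrt{\sum|I_{i_1,\ldots,i_{k-1}}|^2}$. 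Under that corrected hypothesis your argument closes with $t=4y/D$, giving $-4y^2+t^2D^2/8=-2y^2$ as required. You should either note the typo and prove the corrected statement, or honestly record that your argument as written yields $\exp(-2y^2D)$; asserting that the optimisation "produces the desired bound" without computing the exponent is the one genuine gap here. (A second, minor point: the bound is vacuously false for $y<0$, so "for every $y$" should be read as $y>0$, which is all the paper uses.)
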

Unfortunately, those $\delta_{i_1,\ldots,i_k}$ with
$(i_1,\ldots,i_k)\in I$ are not independent. In order to apply
Lemma \ref{hoe}, our main difficulty is to give a suitable
partition of $I$. The following lemma is the key of our proof of
Theorem \ref{t1}. And its proof will be given in the next section.
\begin{Lem}\label{ml} Let
$$
I_*=\{(i_1,\ldots,i_{k-1}):\,(i_1,\ldots,i_{k-1},i_k)\in I\text{ for some }i_k\},
$$
and for $(i_1,\ldots,i_{k-1})\in I_*$, let
$$
I_{i_1,\ldots,i_{k-1}}=\{i_k:\,(i_1,\ldots,i_{k-1},i_k)\in I\}.
$$
Then there exists a partition $I_*=U_1\cup U_2\cup\cdots\cup U_s$
satisfying that\medskip

\noindent(1) $s=O(n^{(k-2)/\alpha})$.\medskip

\noindent(2) $|U_t|=O(n^{1/\alpha})$ for $1\leq t\leq s$.\medskip

\noindent(3) For any $1\leq t\leq s$ and distinct $(i_1,\ldots,i_{k-1}),(j_1,\ldots,j_{k-1})\in U_t$, we have $I_{i_1,\ldots,i_{k-1}}\cap I_{j_1,\ldots,j_{k-1}}=\emptyset$ and $i_r\not=j_r$ for every $1\leq r\leq k-1$.

\noindent(4) For any $1\leq t\leq s$,
$$
\sum_{(i_1,\ldots,i_{k-1})\in U_{t}}|I_{i_1,\ldots,i_{k-1}}|^2=\begin{cases}
O(n^{2(\alpha-1)/\alpha^2}),&\text{if }\alpha>2,\\
O(n^{1/\alpha}),&\text{if }\alpha<2,\\
O(n^{1/2}\log n),&\text{if }\alpha=2.
\end{cases}
$$
\end{Lem}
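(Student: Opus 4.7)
My plan is to first establish a sharp pointwise bound on the fiber sizes $|I_{i_1,\ldots,i_{k-1}}|$, then build the partition in two stages (a coarse shell partition by fiber location, then a greedy refinement inside each shell), and finally verify the four estimates, with case analysis in (4).

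The first step is to show that every $(i_1,\ldots,i_{k-1})\in I_*$ satisfies $|I_{i_1,\ldots,i_{k-1}}|\ll n^{(\alpha-1)/\alpha^2}$, and in fact that $I_{i_1,\ldots,i_{k-1}}$ sits inside an interval of length $O(n^{(\alpha-1)/\alpha^2})$ close to $M^{1/\alpha}$, where $M:=n-\sum_{j<k}i_j^\alpha$. Setting $S:=\sum_{j<k}\bigl((i_j+1)^\alpha-i_j^\alpha\bigr)\ll n^{1-1/\alpha}$, one has $i_k\in\bigl((M-S)_+^{1/\alpha}-1,\,M^{1/\alpha}\bigr)$, and the estimate follows from the same mean-value-theorem computation as in the proof of Lemma~\ref{l1}.

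Next I would subdivide $[0,n^{1/\alpha}]$ into $O(n^{1/\alpha^2})$ consecutive windows $J_\ell$ of length $\Delta:=Cn^{(\alpha-1)/\alpha^2}$ with $C$ large; by a two-coloring of adjacent windows each fiber lies inside a single $J_\ell$. This defines shells $V_\ell=\{(i_1,\ldots,i_{k-1})\in I_*:I_{i_1,\ldots,i_{k-1}}\subseteq J_\ell\}$. Inside each $V_\ell$ I would introduce the conflict graph whose edges join two tuples whose fibers overlap or that share some coordinate $i_r$. A one-dimensional analogue of Lemma~\ref{l1} bounds the number of tuples of $V_\ell$ sharing a fixed coordinate, and combined with the fiber-overlap count (all fibers lie in $J_\ell$ of length $\Delta$), the maximum degree is $O(n^{(k-2)/\alpha-1/\alpha^2})$. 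A greedy coloring then partitions $V_\ell$ into that many classes $U_t$, each satisfying (3) by construction.

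Totalizing gives $s\ll n^{1/\alpha^2}\cdot n^{(k-2)/\alpha-1/\alpha^2}=n^{(k-2)/\alpha}$, which is (1); condition (2) follows because the total number of tuples in each shell $V_\ell$, divided by the number of colors, works out to $O(n^{1/\alpha})$. Condition (4) is verified case-by-case: within $U_t$ the fibers are mutually disjoint and all live inside $J_\ell$, so $\sum_{(i_1,\ldots,i_{k-1})\in U_t}|I_{i_1,\ldots,i_{k-1}}|\ll n^{(\alpha-1)/\alpha^2}$. For $\alpha>2$, multiplying by $\max|I_{i_1,\ldots,i_{k-1}}|\ll n^{(\alpha-1)/\alpha^2}$ gives $n^{2(\alpha-1)/\alpha^2}$. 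For $\alpha<2$ the typical fiber has size $1$, with large fibers confined to a thin boundary region $M=O(1)$, so $\sum|I|^2\ll\sum|I|\ll n^{1/\alpha}$. The borderline $\alpha=2$ produces the extra $\log n$ from a dyadic sum over the nested shells in which the fiber size roughly doubles.

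The central obstacle is the refinement stage: simultaneously enforcing disjoint fibers and distinct coordinates in each $U_t$ subject to the tight bounds $s\ll n^{(k-2)/\alpha}$ and $\max_t|U_t|\ll n^{1/\alpha}$, whose product must essentially match $|I_*|\asymp n^{(k-1)/\alpha}$. There is no slack, so getting the degree bound on the conflict graph right requires a sharp one-dimensional version of Lemma~\ref{l1} describing exactly how many tuples in a fixed shell can share a single coordinate. Tracking the case analysis of (4), particularly the $\log n$-loss at $\alpha=2$, is the second delicate point and depends on the precise distribution of fiber sizes across the shells.
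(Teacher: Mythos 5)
There is a genuine gap in the refinement stage, precisely at the point you flag as having ``no slack''. The claimed maximum degree $O(n^{(k-2)/\alpha-1/\alpha^2})$ of the conflict graph inside a shell $V_\ell$ is false because of the fiber-overlap edges. If two fibers $I_{i_1,\ldots,i_{k-1}}$ and $I_{j_1,\ldots,j_{k-1}}$ share a point $u$, then both $i_1^\alpha+\cdots+i_{k-1}^\alpha$ and $j_1^\alpha+\cdots+j_{k-1}^\alpha$ lie in an interval of length $O(n^{1-1/\alpha})$ around $n-u^\alpha$, and the number of tuples with power sum in such an interval is genuinely of order $n^{(k-2)/\alpha}$ for typical locations (this is the sharp form of Lemma~\ref{l1} in dimension $k-1$). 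Worse, the conflict graph contains cliques of that size: take tuples with $M:=n-\sum_{v<k}i_v^\alpha\asymp n$, so each fiber is a nonempty interval of $O(1)$ integers near $M^{1/\alpha}$; among the $\asymp n^{(k-2)/\alpha}$ tuples whose $M^{1/\alpha}$ falls in a fixed unit interval, a positive proportion share a common fiber element by pigeonhole. Hence the chromatic number of $V_\ell$ is $\gg n^{(k-2)/\alpha}$, your greedy coloring needs that many classes per shell, and totalizing over the $n^{1/\alpha^2}$ shells gives $s\gg n^{(k-2)/\alpha+1/\alpha^2}$, overshooting requirement (1) by $n^{1/\alpha^2}$. (Reusing colors across shells would repair the count of $s$ only if you could also guarantee distinct coordinates between tuples in different shells, which the windowing by fiber location does not control.)

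The structural point you are missing is that the classes $U_t$ must cut \emph{across} the shells rather than live inside a single one: each class should contain at most one tuple per shell. The paper shells by the value of $i_1^\alpha+\cdots+i_{k-1}^\alpha$ in intervals of length $\asymp n^{1-1/\alpha}$ (an alternating pair of interval systems $X_t$, $X_t'$ guarantees disjoint fibers across distinct shells of the same system), giving $O(n^{1/\alpha})$ shells, and then defines the classes by the arithmetic-progression conditions $i_1+d\,i_v=s_{v-1}$ ($2\le v\le k-1$) with $d=\lfloor 2k^2\alpha\rfloor+k$. These conditions force all coordinates to be distinct within a class, and a short computation using $i_1>(n/k)^{1/\alpha}$ shows each progression class meets each shell in at most one tuple. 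This yields $O(n^{(k-2)/\alpha})$ classes of size $O(n^{1/\alpha})$ directly, with (4) following from summing the fiber-size bound over the $\asymp n^{1/\alpha}$ well-separated power-sum values in a class, which is where the three cases and the $\log n$ at $\alpha=2$ actually come from. Your pointwise fiber bound $|I_{i_1,\ldots,i_{k-1}}|\ll n^{(\alpha-1)/\alpha^2}$ and the verification of (2)--(4) are essentially sound, but they cannot rescue the failed count in (1).
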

Let's see how Theorem \ref{t1} can be deduced from Lemma \ref{ml}. For $(i_1,\ldots,i_{k-1})\in I_*$, let
$$
\xi_{i_1,\ldots,i_{k-1}}=\sum_{i_k\in I_{i_1,\ldots,i_{k-1}}}\delta_{i_1,\ldots,i_{k-1},i_k}.
$$
Clearly the possible values of $\xi_{i_1,\ldots,i_{k-1}}$ lie between $0$ and $|I_{i_1,\ldots,i_{k-1}}|$.
For $1\leq t\leq s$, define
$$
\eta_t=\sum_{(i_1,\ldots,i_{k-1})\in U_t}\xi_{i_1,\ldots,i_{k-1}}.
$$
In view of (3) of Lemma \ref{ml}, for distinct
$(i_1,\ldots,i_{k-1}),(j_1,\ldots,j_{k-1})\in U_t$,
$\xi_{i_1,\ldots,i_{k-1}}$ and $\xi_{j_1,\ldots,j_{k-1}}$ are
independent. And by (4) of Lemma \ref{ml} we have
$$
\sqrt{\sum_{(i_1,\ldots,i_{k-1})\in U_t}|I_{i_1,\ldots,i_{k-1}}|^2}\leq D=\begin{cases}C_1n^{(\alpha-1)/\alpha^2},&\text{if }\alpha>2,\\
C_1n^{1/(2\alpha)},&\text{if }\alpha<2,\\
C_1n^{1/4}\sqrt{\log n},&\text{if }\alpha=2,
\end{cases}
$$
for some constant $C_1$. Applying Lemma \ref{hoe} with
$y=\sqrt{((k-2)/\alpha+2)\log n}$, we can obtain
$$
\P(|\sigma_n-\E(\sigma_n)|\geq syD)\leq\sum_{t=1}^s\P(|\eta_t-\E(\eta_t)|\geq yD)\leq2s\exp(-2y^2)=\frac{O(n^{(k-2)/\alpha})}{n^{2(k-2)/\alpha+4}}\leq\frac{1}{n^2},
$$
for sufficiently large $n$. Then with the help of the
Borel-Cantelli lemma, we have
$$
\P(|\sigma_n-\E(\sigma_n)|\geq syD\text{ for infinitely many }n)=0,
$$
i.e., we almost surely have
$$
\sigma_n=\E(\sigma_n)+O(n^{(k-2)/\alpha}\sqrt{\log n}D)
$$
for sufficiently large $n$.

\section{Proof of Lemma \ref{ml}}
\setcounter{equation}{0} \setcounter{Thm}{0} \setcounter{Lem}{0}
\setcounter{Cor}{0}

In this section we shall prove Lemma \ref{ml}.
For $t\geq 0$, define
$$
X_{t}=\{(i_1,\ldots,i_{k-1})\in I_*:\, 4kt\alpha n^{1-1/\alpha}\leq i_1^\alpha+\cdots+i_{k-1}^\alpha<(4kt+2k)\alpha n^{1-1/\alpha}\}
$$
and
$$
X_{t}'=\{(i_1,\ldots,i_{k-1})\in I_*:\, (4kt+2k)\alpha n^{1-1/\alpha}\leq i_1^\alpha+\cdots+i_{k-1}^\alpha<4k(t+1)\alpha n^{1-1/\alpha}\}.
$$
Suppose that $(i_{1},\ldots,i_{k-1})\in X_{s}$ and $(j_{1},\ldots,j_{k-1})\in X_{t}$ where $s\not=t$. We claim that $I_{i_{1},\ldots,i_{k-1}}\cap I_{j_{1},\ldots,j_{k-1}}=\emptyset$. Assume on the contrary that
$u\in I_{i_{1},\ldots,i_{k-1}}\cap I_{j_{1},\ldots,j_{k-1}}$. Without loss of generality, suppose that $s<t$. Then we have
\begin{align*}
j_{1}^\alpha+\cdots+j_{k-1}^\alpha+u^\alpha\leq n\leq&(i_{1}+1)^\alpha+\cdots+(i_{k-1}+1)^\alpha+(u+1)^\alpha.
\end{align*}
Since $i_1,\ldots,i_{k-1},u\leq n^{1/\alpha}$ and $(x+1)^\alpha-x^\alpha\leq \alpha(x+1)^{\alpha-1}$ for $x\geq 0$, we get
\begin{align*}
&j_{1}^\alpha+\cdots+j_{k-1}^\alpha+u^\alpha\\
\leq&i_{1}^\alpha+\cdots+i_{k-1}^\alpha+u^\alpha+\alpha((i_{1}+1)^{\alpha-1}+\cdots+(i_{k-1}+1)^{\alpha-1}+(u+1)^{\alpha-1})\\
\leq&i_{1}^\alpha+\cdots+i_{k-1}^\alpha+u^\alpha+k\alpha(n^{1/\alpha}+1)^{\alpha-1}.
\end{align*}
On the other hand, by the definition of $X_t$, we have
\begin{align*}i_{1}^\alpha+\cdots+i_{k-1}^\alpha<&(4ks+2k)\alpha n^{1-1/\alpha}\\
\leq&(4kt-2k)\alpha n^{1-1/\alpha}\leq j_{1}^\alpha+\cdots+j_{k-1}^\alpha-2k\alpha n^{1-1/\alpha}.\end{align*}
This evidently leads an contradiction.
Similarly, for $s\not=t$, if $(i_{1},\ldots,i_{k-1})\in X_{s}'$ and $(j_{1},\ldots,j_{k-1})\in X_{t}'$, we also have $I_{i_{1},\ldots,i_{k-1}}\cap I_{j_{1},\ldots,j_{k-1}}=\emptyset$.

Let $d=\lfloor 2k^2\alpha\rfloor+k$. For $s_1,s_2,\ldots,s_{k-2}$, define
$$
Y_{s_1,\ldots,s_{k-2}}=\{(i_1,\ldots,i_{k-1})\in I_*:\,i_1+di_{v}=s_{v-1}\text{ for each }2\leq v\leq k-1\}.
$$
Clearly for distinct $(i_{1},\ldots,i_{k-1}),(j_{1},\ldots,j_{k-1})\in Y_{s_1,\ldots,s_{k-2}}$, we must have $i_{v}\not=j_{v}$ for all $v$. Below we shall show that $|Y_{s_1,\ldots,s_{k-2}}\cap X_t|\leq 1$ for arbitrary $s_1,\ldots,s_{k-2},t$. Assume that there exist distinct $(i_1,\ldots,i_{k-1}),(j_{1},\ldots,j_{k-1 })\in Y_{s_1,\ldots,s_{k-2}}\cap X_t$. Without loss of generality, suppose that $i_1<j_1$. For all $v$, since $i_1+di_v=j_1+dj_v$, we must have $j_1-i_1=dq$ and $i_v-j_v=q$ for some positive integer $q$. Hence
$$
i_{1}^\alpha+i_{2}^\alpha+\cdots+i_{k-1}^\alpha=(j_1-dq)^\alpha+(j_2+q)^\alpha+\cdots+(j_k+q)^\alpha.
$$
Clearly,
$$
j_1^\alpha-i_1^\alpha=j_1^\alpha-(j_1-dq)^\alpha\geq dq\cdot\alpha(j_1-dq)^{\alpha-1}=dq\alpha i_1^{\alpha-1},
$$
and for $2\leq v\leq k-1$,
$$
i_v^\alpha-j_v^\alpha=(j_v+q)^\alpha-j_v^\alpha\leq q\cdot\alpha(j_v+q)^{\alpha-1}=q\alpha i_v^{\alpha-1}.
$$
Recalling $i_1>i_2>\cdots>i_{k-1}$, we have $i_1>(n/k)^{1/\alpha}$. Thus
\begin{align*}
&(j_{1}^\alpha+j_{2}^\alpha+\cdots+j_{k-1}^\alpha)-(i_{1}^\alpha+i_{2}^\alpha+\cdots+i_{k-1}^\alpha)\\\geq&
dq\alpha
i_1^{\alpha-1}-q\alpha(i_2^{\alpha-1}+\cdots+i_{k-1}^{\alpha-1})\geq(d-k+1)q\alpha
i_1^{\alpha-1}>2k\alpha n^{1-1/\alpha}.
\end{align*}
It is impossible since both $(i_1,\ldots,i_{k-1})$ and $(j_{1},\ldots,j_{k-1 })$ lie in $X_t$. Similarly, $|Y_{s_1,\ldots,s_{k-2}}\cap X_t'|\leq 1$ for arbitrary $s_1,\ldots,s_{k-2},t$.

Now let
$$
U_{s_1,\ldots,s_{k-2}}=\bigcup_t \bigg(Y_{s_1,\ldots,s_{k-2}}\cap X_t\bigg)
$$
and
$$
U_{s_1,\ldots,s_{k-2}}'=\bigcup_t\bigg(Y_{s_1,\ldots,s_{k-2}}\cap X_t'\bigg).
$$
Below we shall only verify the requirements (1)-(4) for
$U_{s_1,\ldots,s_{k-2}}$, since all are similar to
$U_{s_1,\ldots,s_{k-2}}'$.

Note that $X_t\neq\emptyset$ implies that $$2kt\alpha n^{1-1/\alpha}\leq i_1^{\alpha}+\cdots+i_{k-1}^{\alpha}<n.$$
Thus clearly
$$
|\{t: X_t\neq\emptyset\}|=O(n^{1/\alpha}),
$$
i.e., $|U_{s_1,\ldots,s_{k-2}}|=O(n^{1/\alpha})$. Furthermore, since all those $s_1,\ldots,s_{k-2}$ are less than
$$
(d+1)i_1\leq(d+1)n^{1/\alpha},
$$
we have
$$|\{(s_1,\ldots,s_{k-2}):\,Y_{s_1,\ldots,s_{k-2}}\not=\emptyset\}|=O(n^{(k-2)/\alpha}).$$ Hence the number of $U_{s_1,\ldots,s_{k-2}}\not=\emptyset$ is $O(n^{(k-2)/\alpha})$.

Clearly,
$$
|I_{i_1,\ldots,i_{k-1}}|\leq(n-i_1^\alpha-\cdots-i_{k-1}^\alpha)^{1/\alpha}-(\min\{n-(i_1+1)^\alpha-\cdots-(i_{k-1}+1)^\alpha,0\})^{1/\alpha}+1.
$$
If $(i_1+1)^\alpha+\cdots+(i_{k-1}+1)^\alpha<n-n^{(\alpha-1)/\alpha}$, then
\begin{align*}
&(n-i_1^\alpha-\cdots-i_{k-1}^\alpha)^{1/\alpha}-(n-(i_1+1)^\alpha-\cdots-(i_{k-1}+1)^\alpha)^{1/\alpha}+1\\
\ll&
\frac{(i_1+1)^\alpha+\cdots+(i_{k-1}+1)^\alpha-i_1^\alpha-\cdots-i_{k-1}^\alpha}{(n-(i_1+1)^\alpha-\cdots-(i_{k-1}+1)^\alpha)^{1-1/\alpha}}\\
\ll&\frac{(i_1+1)^{\alpha-1}+\cdots+(i_{k-1}+1)^{\alpha-1}}{(n-(i_1+1)^\alpha-\cdots-(i_{k-1}+1)^\alpha)^{1-1/\alpha}}.
\end{align*}
Note that for any $U_{s_1,\ldots,s_{k-2}}$ and $(i_1,\ldots,i_{k-1}),(j_1,\ldots,j_{k-1})\in U_{s_1,\ldots,s_{k-2}}$, we have
$$
|i_1^\alpha+\cdots+i_{k-1}^\alpha-(j_1^\alpha+\cdots+j_{k-1}^\alpha)|\geq 4k\alpha n^{1-1/\alpha}.
$$
Since
$$
(i_1+1)^\alpha+\cdots+(i_{k-1}+1)^\alpha-i_1^\alpha-\cdots-i_{k-1}^\alpha\ll n^{(\alpha-1)/\alpha},
$$
the number of $(i_1,\ldots,i_{k-1})\in U_{s_1,\ldots,s_{k-2}}$ satisfying
$$
(i_1+1)^\alpha+\cdots+(i_{k-1}+1)^\alpha\geq n-n^{(\alpha-1)/\alpha}.
$$
is at most $O(1)$. Thus
\begin{align*}
&\sum_{(i_1,\ldots,i_{k-1})\in U_{s_1,\ldots,s_{k-2}}}|I_{i_1,\ldots,i_{k-1}}|^2\\
=&
\sum_{\substack{(i_1,\ldots,i_{k-1})\in U_{s_1,\ldots,s_{k-2}}\\
(i_1+1)^\alpha+\cdots+(i_{k-1}+1)^\alpha\geq
n-n^{(\alpha-1)/\alpha}}}|I_{i_1,\ldots,i_{k-1}}|^2
+\sum_{\substack{(i_1,\ldots,i_{k-1})\in U_{s_1,\ldots,s_{k-2}}\\
(i_1+1)^\alpha+\cdots+(i_{k-1}+1)^\alpha<n-n^{(\alpha-1)/\alpha}}}|I_{i_1,\ldots,i_{k-1}}|^2\\
\ll&(n^{(\alpha-1)/\alpha^2})^2+\int_{0}^{n^{1/\alpha}-1}\bigg(\frac{n^{1-1/\alpha}}{(n-t n^{1-1/\alpha})^{1-1/\alpha}}\bigg)^2d t.\end{align*}
That is,
$$
\sum_{(i_1,\ldots,i_{k-1})\in U_{s_1,\ldots,s_{k-2}}}|I_{i_1,\ldots,i_{k-1}}|^2=\begin{cases}
O(n^{2(\alpha-1)/\alpha^2}),&\text{if }\alpha>2,\\
O(n^{1/\alpha}),&\text{if }\alpha<2,\\
O(n^{1/2}\log n),&\text{if }\alpha=2.
\end{cases}
$$\qed


\begin{thebibliography}{99}

\bibitem{CT11} Y. G. Chen and M. Tang, \textit{A quantitative Erd\H os-Fuchs theorem and its generalization}, Acta Arith., \textbf{149}(2011), 171-180.

\bibitem{EF56} P. Erd\H os and W. J. Fuchs, \textit{On a problem of additive number theory}, J. London Math. Soc., \textbf{31}(1956), 67-73.

\bibitem{H63} W. Hoeffding, \textit{Probability inequalities for sums of bounded random variables},  J. Amer. Statist. Assoc., \textbf{58}(1963), 13â€0.

\bibitem{H01} G. Horv\'ath, \textit{On a generalization of a theorem of Erd\H os and Fuchs}, Acta Math. Hungar., \textbf{92}(2001), 83-110.

\bibitem{H02} G. Horv\'ath, \textit{On a theorem of Erd\H os and Fuchs}, Acta Arith., \textbf{103}(2002), 321-328.

\bibitem{H04} G. Horv\'ath, \textit{An improvement of an extension of a theorem of Erd\H os and Fuchs}, Acta Math. Hungar., \textbf{104}(2004), 27-37.

\bibitem{MV90} H. L. Montgomery and R. C. Vaughan, \textit{On the Erd\H os-Fuchs theorems}, A tribute to Paul Erd\H os, 331-338, Cambridge Univ. Press, Cambridge, 1990.

\bibitem{R97} I. Z. Ruzsa, \textit{A converse to a theorem of Erd\H os and Fuchs}, J. Number Theory, \textbf{62}(1997), 397-402.

\bibitem{T09} M. Tang, \textit{On a generalization of a theorem of Erd\H os and Fuchs}, Discrete Math., \textbf{309}(2009), 6288-6293.



\end{thebibliography}
\end{document}